\documentclass[a4paper,11pt]{article}
\usepackage[top=20mm,bottom=32mm]{geometry}
\usepackage{amsmath}
\usepackage{amsfonts}
\usepackage{amsthm}
\usepackage{graphicx}
\usepackage{hyperref}
\DeclareMathOperator{\real}{Re}
\DeclareMathOperator{\diag}{diag}
\newcommand{\eps}{\varepsilon}
\newcommand{\ones}{\mathbf{1}}
\newcommand{\unit}[1]{I_{#1\times #1}}
\newtheorem{lemma}{Lemma}
\newtheorem{proposition}{Proposition}
\newtheorem{theorem}{Theorem}

\bibliographystyle{plainurl}
\begin{document}

\title{Characterization of the Response Maps of
  Alternating-Current Networks}
\author{G\"unter Rote\\
\small Freie Universit\"at Berlin, Institut f\"ur Informatik\\
\small rote@inf.fu-berlin.de
}
\maketitle

\begin{abstract}
In an \emph{alternating-current network},
each edge has a complex \emph{conductance} with positive
real part.
The \emph{response map} is the linear map from the vector of
voltages at a subset of \emph{boundary nodes} to the vector of currents flowing into
the network through these nodes.
\par
We prove that the known necessary conditions for these response maps
are sufficient, and we %show how to
construct an
appropriate %alternating-current
network for a given response map.
\end{abstract}

{\small
\paragraph{Keywords.}
Alternating current,
electrical network,
Dirichlet-to-Neumann map

\paragraph{AMS 2010 Subject Classification.}
34B45, 	%Boundary value problems on graphs and networks
94C05 	%Analytic circuit theory

}

\section{Problem Statement and Background}

An \emph{alternating-current network} is
 an undirected graph $G$ in which each edge
 $uw$ is assigned a \emph{conductance}
 $c_{uw}=c_{wu}\in \mathbb{C}$
 with positive real part: $\real c_{uw}>0$.
Such % alternating-current
 networks can
 model the physics of alternating current with a fixed frequency
%$\omega$
in an electric network of conductors, capacitors, and inductors
\cite[Section~2.4]{Prasolov}.
%A subset of
At least %$b\ge2$ 
$2$ of the nodes are designated as \emph{boundary nodes} (or \emph{terminals}).
%(or {terminals}).
Any remaining nodes % (if there are any)
are called \emph{interior nodes}.

A \emph{voltage} is
a complex-valued function $V_u$ on the set of nodes such that
the equilibrium condition
\begin{equation}
  \label{eq:equilibrium}
  \sum_{uw}c_{uw}(V_u-V_w)=0
\end{equation}
holds for each interior node $u$,
where the sum is taken over the edges $uw$ incident to%~ the node%
~$u$.
In a connected network, 
 the voltage is uniquely determined by its boundary
values
\cite[Section~5.1]{Prasolov}.
 The \emph{current flowing into the network} through a boundary
node $u$ is
\begin{equation}
  \label{eq:current}
I_u:=\sum_{uw}c_{uw}(V_u-V_w).   
\end{equation}
The
\emph{response map} is the linear map that takes the vector $(V_u)$ of
voltages at the boundary nodes to the vector $(I_u)$ of currents flowing
into the network through the boundary nodes.

Which linear maps are response maps of electrical networks?
%and it has even been solved under the constraint that the network $G$ is planar
%% Curtis, Ingerman, and Morrow,
%\cite{Curtis-Ingerman-Morrow,%}\cite{
%Curtis-Morrow00}.
% nicht genau: Circular-planar
This question
 has been posed as an open problem
%even without any constraints on the structure of $G$
 \cite[Problem~4.8]{Prasolov}, see
 also \cite[Questions 1 and 2]{r-open-15}. %The purpose of
 This note settles the problem:
Theorem~\ref{main} shows that the known necessary conditions are
sufficient.
Prasolov and Skopenkov~\cite[Section~4.2]{Prasolov} expressed hopes
that this conjectured solution of their question might allow %further
progress on tilings: deciding if a
 polygon can be tiled by rectangles with a given selection of possible
 aspect ratios.
%Problem 4.1
%Which polygons can be tiled by rectangles of given ratios c 1 , . . . , c n ?
%Problem 1.2. Which rectangles can be tiled by rectangles of given
%ratios c 1 , . . . , c n ?

% The more familiar
% \emph{direct-current} networks are the special case
% where
% all conductances are real
% and positive, because they are the inverses of the resistances.
% For
% this case, the characterization question has a trivial answer,
% see the remark after
% Theorem~\ref{main}.

The general \emph{electrical impedance tomography problem} is to
reconstruct the network from its response map. This problem is more
difficult and can only be solved when the structure of
the network is constrained, cf.~%, for example, planarity
\cite{Curtis-Ingerman-Morrow,%}\cite{
Curtis-Morrow00,deV1,deV2}.

% Consider the linear map
% $C^b\to C^b$
% which takes the vector of voltages

% ( U 1 ,..., U b )

% to the vector of
% incoming currents

% ( I 1 ,..., I b ) = ( $\sum$ n k = 1 I 1 k ,..., $\sum$ n k = 1 I bk )

% flowing inside the network through the nodes 1
% ,...,
% b
% , respectively.

\section{Statement and Discussion of the Characterization}
\label{sec:result}

%The following theorem characterizes the complex symmetric matrices
%$\Lambda$ that describe the response matrices of networks.
%response matrices

%.. response matrix

\begin{theorem}\label{main}
  Let $\Lambda = S+Ti$ be a $b\times b$ complex symmetric matrix, for $b\ge 2$.
Then
$\Lambda$~describes the response map %trix
of some connected alternating-current
  network $G$ with $b$ boundary nodes if and only if it satisfies the following conditions\textup:
  \begin{enumerate}
  \item $\Lambda$ has row sums $0$.
  \item The real part $S$ is positive semidefinite.
  \item % $S$ has rank $b-1$. (Or equivalently: 
The only solutions of $Sx=0$ are the
    constant vectors $x=(c,c,\ldots,c)^T$.
  \end{enumerate}
%Moreover, 
If $\Lambda$ is given, one can construct
a suitable network $G$ with %at most 
$2b-2$ nodes.
\end{theorem}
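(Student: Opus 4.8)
\noindent\emph{Strategy.}
I first record the matrix description of a response map and use it for the (known) necessity, then prove sufficiency together with the bound $2b-2$ by induction on~$b$. Let $L$ be the weighted Laplacian of $G$, with $L_{uw}=-c_{uw}$ for $u\neq w$ and $L_{uu}=\sum_{w}c_{uw}$; it is complex symmetric and $L\ones=0$. Writing the nodes as boundary $B$ and interior $I$, the equilibrium condition~\eqref{eq:equilibrium} reads $L_{IB}V_B+L_{II}V_I=0$, and eliminating $V_I$ exhibits the response map as a Schur complement,
\[
\Lambda=L_{BB}-L_{BI}L_{II}^{-1}L_{IB}.
\]
For the harmonic extension $V$ of a boundary vector $V_B$ one has $V_B^{*}\Lambda V_B=V^{*}LV=\sum_{uw}c_{uw}\,|V_u-V_w|^{2}$, because $(LV)_I=0$. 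Its real part $\sum_{uw}(\real c_{uw})|V_u-V_w|^{2}\ge 0$ shows $S$ is positive semidefinite (condition~2); it vanishes only when $V$, hence $x=V_B$, is constant on the connected graph (condition~3); and $L\ones=0$ passes to the Schur complement (condition~1). As these are the known necessary conditions, I keep this brief and concentrate on the construction.

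For sufficiency I induct on~$b$, the hypothesis being that every valid $(b-1)\times(b-1)$ matrix is realizable on $2(b-1)-2=2b-4$ nodes. When $b=2$ a single edge of conductance $\Lambda_{11}$ (of positive real part by conditions~2 and~3) realizes $\Lambda$ on $2$ nodes. For the step I will reattach one boundary node $b$ through one new interior node, reducing $\Lambda$ to a \emph{valid} matrix $\Lambda'$ on $B'=\{1,\dots,b-1\}$ to which the hypothesis applies. That such a reduction is natural is shown by the Schur complement of $\Lambda$ at node~$b$, namely $\hat\Lambda=\Lambda_{B'B'}-\lambda\lambda^{T}/\Lambda_{bb}$ with $\lambda=\Lambda_{B'b}$ and $\Lambda_{bb}\neq0$ since $S_{bb}>0$: applying the energy identity to the extension $\tilde x=(x,-\lambda^{T}x/\Lambda_{bb})$, for which $(\Lambda\tilde x)_b=0$, gives $x^{*}\hat\Lambda x=\tilde x^{*}\Lambda\tilde x$ and hence $\real(x^{*}\hat\Lambda x)=\tilde x^{*}S\tilde x\ge0$, with equality only for constant $x$. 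Thus making a node interior preserves validity; the matrix actually fed to the induction will be a controlled variant $\Lambda'$ of this $\hat\Lambda$.

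Concretely, I reintroduce node~$b$ together with one interior node~$q$, adding exactly two nodes and reaching $2b-2$: attach $b$ to $B'$ by direct edges (conductances $\beta$), to $q$ by an edge (conductance $\gamma$), and $q$ to $B'$ by edges (conductances $\mu$). Eliminating~$q$, the enlarged network realizes
\[
\begin{pmatrix}
\Lambda'+\diag(\beta+\mu)-\tfrac{1}{\sigma}\mu\mu^{T} & -\beta-\tfrac{\gamma}{\sigma}\mu\\
-\beta^{T}-\tfrac{\gamma}{\sigma}\mu^{T} & \ones^{T}\beta+\gamma-\tfrac{\gamma^{2}}{\sigma}
\end{pmatrix},
\qquad \sigma=\ones^{T}\mu+\gamma,
\]
where $\Lambda'$ is the response realized on $B'$. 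Requiring this to equal $\Lambda$ forces $\beta=-\lambda-\tfrac{\gamma}{\sigma}\mu$, after which the corner and all row sums agree automatically, so for every $\mu,\gamma$ the matrix $\Lambda'=\Lambda_{B'B'}-\diag(\beta+\mu)+\tfrac{1}{\sigma}\mu\mu^{T}$ is symmetric with zero row sums. The decisive feature is that $q$ produces, by a $Y$--$\Delta$ effect, effective couplings $-\gamma\mu_i/\sigma$ whose real part can take \emph{either} sign, so $b$ can be attached with positive-real-part conductances even at indices with $S_{ib}>0$, which no direct edge allows. One cannot simply take $\Lambda'=\hat\Lambda$: that would force $\beta=-\mu$, i.e.\ conductances of opposite real-part sign, so $\Lambda'$ must be a genuinely different valid matrix, chosen together with the gadget.

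The main obstacle is exactly this choice: to pick $\mu\in\mathbb{C}^{\,b-1}$ and $\gamma\in\mathbb{C}$ with $\real\mu_i>0$, $\real\gamma>0$, with every $\beta_i=-\lambda_i-\tfrac{\gamma}{\sigma}\mu_i$ of nonnegative real part, and with $\Lambda'$ still valid---its real part positive semidefinite with kernel exactly $\ones$. These requirements pull in opposite directions. Setting $\theta=\gamma/\sigma$, the imaginary parts of the $\mu_i$ are independent levers on $\real\beta_i=-\real\lambda_i-\real(\theta\mu_i)$ whenever $\mathrm{Im}\,\theta\neq0$, so the sign constraints on $\beta$ can always be met; but making the $\mu_i$ large to satisfy them tends to destroy either the semidefiniteness of $\real\Lambda'$ or the one-dimensionality of its kernel on $\ones^{\perp}$. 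My plan is to use the several free complex parameters, together with the openness of the region cut out by conditions~2 and~3, to land at an interior point meeting all constraints at once---via a continuity or degree argument steered by the valid reference matrix $\hat\Lambda$, balancing the imaginary parts of $\gamma$ and $\mu$ against the positive-semidefinite star term $\tfrac{1}{\sigma}\mu\mu^{T}$. Carrying out this balance quantitatively is the crux of the proof; once it is done, the induction closes and yields a network on $2b-2$ nodes.
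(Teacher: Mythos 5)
Your necessity argument and your base case are fine, and the node count $2+2(b-2)=2b-2$ of the proposed induction is consistent with the theorem. But the sufficiency proof has a genuine gap, and it sits exactly where the theorem is hard. The inductive step requires exhibiting $\mu\in\mathbb{C}^{b-1}$ and $\gamma\in\mathbb{C}$ with $\real\mu_i>0$, $\real\gamma>0$, such that every $\beta_i=-\lambda_i-(\gamma/\sigma)\mu_i$ has positive real part (or vanishes), \emph{and} such that the induced matrix $\Lambda'=\Lambda_{B'B'}-\diag(\beta+\mu)+\tfrac1\sigma\mu\mu^{T}$ still satisfies conditions 2 and 3. You observe correctly that these requirements pull against each other --- the imaginary parts of $\mu$ that fix the signs of $\real\beta_i$ feed back into $\real\bigl(\tfrac1\sigma\mu\mu^T\bigr)$ and into $\real(\beta+\mu)$ on the diagonal, and can destroy semidefiniteness of $\real\Lambda'$ or enlarge its kernel --- but you then only announce ``a continuity or degree argument'' without specifying the map, its domain, or why the target point is attained. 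Openness of conditions 2--3 does not by itself help: condition 3 is an open condition only relative to the zero-row-sum slice, the set of admissible $(\mu,\gamma)$ is constrained by several strict inequalities simultaneously, and nothing in the proposal shows this set is nonempty. As written, the crux is deferred rather than proved, so the induction does not close.

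For comparison, the paper avoids the node-by-node reattachment entirely and gives a single global construction. It diagonalizes $S=UDU^T$, subtracts $\lambda_2$ (the smallest positive eigenvalue) from all positive eigenvalues to create slack, and writes the resulting matrix as $S'=WW^T=S-\lambda_2\unit b+\lambda_2\ones_{b\times b}/b$ with $W$ real of size $b\times(b-2)$ and $W^Te_b=0$. Each column of $W$ becomes one interior node joined to every boundary node by a conductance $\delta-\eps w\,i$ with $\delta=\lambda_2/2n$, $\eps=\sqrt{b\delta}$, so each interior node contributes a rank-one piece of $S'$ plus a controlled error proportional to $\ones_{b\times b}$; the $\lambda_2$-slack is exactly what absorbs the accumulated error, and the two zero eigenvalues of $D'$ are what save two interior nodes and give $n=b-2$. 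The imaginary part $T$ is then obtained for free by putting edges of real part $\lambda_2/2b$ between all boundary nodes and tuning their imaginary parts. If you want to salvage your inductive route you would need to carry out the quantitative balance you describe; the paper's spectral construction shows one way the difficulty can be sidestepped altogether.
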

It has been shown by
Prasolov and Skopenkov 
 that these conditions on $\Lambda$ are necessary, see in particular
\cite[Lemma~5.2(5)]{Prasolov} for condition~2 and
\cite[Remark~5.3]{Prasolov} for condition~3, which depends on $G$
being connected.
For % the better-known 
direct-current networks, i.\,e., %that is, 
networks with
real (and nonnegative) conductances, it is known that the
response matrix must fulfill the above conditions 1--3, plus the condition
that the off-diagonal elements are $\le 0$. In this case, sufficiency
is trivial, since one can take $\Lambda$ directly as the Laplace %Kirchhoff
matrix
(see Section~\ref{sec:preliminaries})
of
a network, without any interior nodes.

For
alternating-current networks,
sufficiency of
conditions 1--3
 is easy for $b=2$, by the same reason as for
direct-current networks:
 Condition~1 implies that
 $\Lambda$ is of the form $
\left( \begin{smallmatrix}
   c&-c\\-c&c
 \end{smallmatrix}\right)
$,
and by
 condition~2,
 % the off-diagonal element
 $c$ % of $\Lambda$
must have positive % negative
real part.
% Thus, $\Lambda$ can be used directly as the Laplace
% matrix.
No interior nodes are
needed, and the network consists of a single edge of conductance~$c$.
For $b\ge3$, however, 
the matrix $\Lambda$ can have off-diagonal entries with positive
real part, and this implies that interior nodes are required,
see for example the $3\times3$ matrix~\eqref{example} in
Section~\ref{sec:example}.
For $b=3$,
sufficiency
has been established
by
Prasolov and Skopenkov 
\cite[Theorem~4.7]{Prasolov}, using one interior node.
Their construction is different from ours when specialized to the case $b=3$.
% One can
%show that an additional node may be necessary.
We do not know whether the number % bound
$b-2$ %on the
of interior %additional
nodes is optimal for $b\ge4$.

\section{The Laplace Matrix and the Response Matrix}
\label{sec:preliminaries}

We will now recall how the matrix of the response map is computed,
and we will prove a simple lemma that will be useful.
The statements of this section are basic linear algebra and hold
both over the
reals and over the complex numbers. %field.

In the rest of the paper,
$\unit n$ denotes the $n\times n$ unit matrix,
$\ones_{m\times n}$
 denotes the all-ones matrix of dimension
${m\times n}$,
and $e_n=\ones_{1\times n}$ denotes
 the all-ones vector of size $n$.

We can assume without loss of generality that the network has no loops:
 $c_{uu}=0$.
 The
 \emph{Laplace matrix}
(or \emph{Kirchhoff matrix}) $L$ of the network is defined as follows:
The off-diagonal edges $L_{uv}$ for $u\ne v$ are the negative
conductances:
\begin{displaymath}
  L_{uv}=
  \begin{cases}
    -c_{uv},& \text{if there is an edge between $u$ and $v$,} \\
    0, & \text{otherwise.}
  \end{cases}
\end{displaymath}
The diagonal elements $L_{uu}$ are chosen to make the row sums 0:
$$L_{uu}=\sum_{uw} c_{uw}$$

If there are no interior nodes,
the response matrix % $R$
 is equal to~$L$.
Otherwise, the response matrix can calculated from $L$ as
follows.
%  % We assume that % the network has $n$ nodes, and 
% the first nodes
%  $1,2,\ldots,b$ are the boundary nodes.
Assume that the nodes $1,2,\ldots,b$ are the boundary nodes, and
$b+1,\ldots,b+n$ are the interior nodes.
Partition $L$ into blocks accordingly:
\begin{equation}
  \label{Laplace}
L=
\begin{pmatrix}
  A&B\\
B^T&C
\end{pmatrix}
\end{equation}
with $A\in \mathbb{C}^{b\times b}$,
 $B\in \mathbb{C}^{b\times n}$,
and $C\in \mathbb{C}^{n\times n}$.
\begin{proposition}
\label{response}
Let $L$ be the Laplace matrix of a connected network $G$
with at least one interior node,
partitioned into blocks according to \eqref{Laplace}.
Then the submatrix $C$ is invertible, and
the response matrix $R$ is equal to
the Schur complement of $C$ in $L$\textup:
$$R=A-BC^{-1}B^T\eqno\qed$$
\end{proposition}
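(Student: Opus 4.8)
The plan is to treat the two assertions separately. The invertibility of $C$ is the substantive part and is where the hypotheses on $G$ are used; once it is available, the Schur-complement formula falls out by a routine elimination of the interior voltages. I would therefore establish invertibility first and then assemble the formula.

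For the elimination, I would note that for any voltage vector $V=(V_\partial,V_{\mathrm{int}})^T$, split into its boundary part $V_\partial\in\mathbb{C}^b$ and interior part $V_{\mathrm{int}}\in\mathbb{C}^n$, the quantity $\sum_{uw}c_{uw}(V_u-V_w)$ appearing in both \eqref{eq:equilibrium} and \eqref{eq:current} is exactly the $u$-th entry of $LV$. Using the block form \eqref{Laplace}, the interior entries of $LV$ are $B^TV_\partial+CV_{\mathrm{int}}$ and the boundary entries are $AV_\partial+BV_{\mathrm{int}}$. The equilibrium condition forces the interior entries to vanish, so $V_{\mathrm{int}}=-C^{-1}B^TV_\partial$ as soon as $C$ is invertible; substituting into the boundary entries gives the current vector $I_\partial=(A-BC^{-1}B^T)V_\partial$, which is the claimed identity $R=A-BC^{-1}B^T$.

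To prove that $C$ is invertible, I would pass to the Hermitian form $x\mapsto x^*Lx$, which is where the assumption $\real c_{uw}>0$ enters. Writing $L=\sum_{uw}c_{uw}(\delta_u-\delta_w)(\delta_u-\delta_w)^T$, where $\delta_u$ is the unit vector supported at node $u$ and the sum runs once over each edge, one reads off that $x^*Lx=\sum_{uw}c_{uw}|x_u-x_w|^2$. Its real part $\sum_{uw}(\real c_{uw})|x_u-x_w|^2$ is nonnegative and vanishes only when $x_u=x_w$ along every edge, so by connectedness of $G$ it vanishes precisely for the constant vectors. Now suppose $Cx=0$ for some $x\in\mathbb{C}^n$, and extend $x$ by zeros on the boundary nodes to $\tilde x=(0,x)^T$. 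The block computation gives $\tilde x^*L\tilde x=x^*Cx=0$, so $\real(\tilde x^*L\tilde x)=0$, forcing $\tilde x$ to be constant; but $\tilde x$ is zero on the nonempty set of boundary nodes, so $\tilde x=0$ and hence $x=0$. Thus $C$ has trivial kernel and is invertible.

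The main obstacle is exactly this last step: it is the only place where $\real c_{uw}>0$ and the connectedness of $G$ are genuinely needed together, and the crux is to use the Hermitian form $x^*Lx$ rather than the complex bilinear form $x^TLx$. The latter need not control $x$, since $\sum_{uw}c_{uw}(x_u-x_w)^2$ can vanish for nonconstant complex $x$, whereas the real part of the Hermitian form cannot.
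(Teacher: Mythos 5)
Your proof is correct and complete. Note that the paper does not actually prove this proposition: it states that the formula ``follows easily from writing the equations \thetag{\ref{eq:equilibrium}--\ref{eq:current}} in block form and substituting the solutions'' and otherwise defers to the cited references, so your elimination of $V_{\mathrm{int}}$ is exactly the sketch the paper has in mind, and you are supplying the part it outsources, namely the invertibility of $C$. Your argument there is the standard one and is sound: the edge decomposition $L=\sum_{uw}c_{uw}(\delta_u-\delta_w)(\delta_u-\delta_w)^T$ gives $\real(\tilde x^*L\tilde x)=\sum_{uw}(\real c_{uw})\,|\tilde x_u-\tilde x_w|^2$, which by connectedness vanishes only for constant $\tilde x$, and padding a kernel vector of $C$ with zeros on the (nonempty) boundary then forces it to be zero. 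Your closing remark correctly identifies the one genuine subtlety in the complex setting: the bilinear form $x^TLx=\sum c_{uw}(x_u-x_w)^2$ can vanish for nonconstant complex $x$, so one must use the Hermitian form and take real parts; this is precisely where the hypothesis $\real c_{uw}>0$ does its work.
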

This well-known formula follows easily from writing the equations
\thetag{\ref{eq:equilibrium}--\ref{eq:current}} in block form
and substituting the solutions,
 see
  \cite[Theorem~2.3]{Curtis-Ingerman-Morrow} % page 121
or
  \cite[Lemma~3.8 and Theorem~3.9]{Curtis-Morrow00}. % page 43
% ... discuss when or why $C$ is invertible ... 

 \begin{lemma}\label{row-sums}
   Assume that $L$ is a %symmetric
   $(b+n)\times(b+n)$ matrix of the form \eqref{Laplace},
 $C$~is invertible, and the last $n$ row sums of $L$ are zero.
 Then
the row sums of the response matrix $R
=A-BC^{-1}B^T$
 are zero
if and only if
the first $b$ row sums of $L$ are zero.
 \end{lemma}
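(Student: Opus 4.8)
The plan is to translate each ``row sums are zero'' hypothesis into a single matrix--vector identity and then to read off the equivalence from one direct substitution. Writing $e_b^T$ and $e_n^T$ for the all-ones column vectors of sizes $b$ and $n$, the row-sum vector of $R$ is $R e_b^T$; the assumption that the last $n$ row sums of $L$ vanish reads $B^T e_b^T + C e_n^T = 0$; and the statement that the first $b$ row sums of $L$ vanish reads $A e_b^T + B e_n^T = 0$. (These are just the two block rows of $L\,(e_b^T;\,e_n^T)^T=0$.) With this dictionary, the lemma becomes the assertion that $R e_b^T = 0$ holds if and only if $A e_b^T + B e_n^T = 0$.

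First I would use the hypothesis on the last $n$ rows together with the invertibility of $C$. Solving $B^T e_b^T + C e_n^T = 0$ for $e_n^T$ gives $C^{-1} B^T e_b^T = -\,e_n^T$. This is the only place where invertibility of $C$ enters, and it is exactly the ingredient that eliminates $C^{-1}$ from the computation below.

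Next I would compute the row-sum vector of $R$ directly and substitute:
\[
  R e_b^T = (A - B C^{-1} B^T)\, e_b^T = A e_b^T - B\,(C^{-1} B^T e_b^T) = A e_b^T + B e_n^T .
\]
Thus $R e_b^T$ is literally the vector of the first $b$ row sums of $L$, so the two quantities vanish simultaneously, which is the claimed equivalence.

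There is no genuine obstacle here: the mathematical content is the single substitution above, and the whole lemma reduces to that identity. The only point requiring care is bookkeeping — fixing the column-vector conventions for the all-ones vectors once at the outset and matching the three row-sum conditions to the blocks $A$, $B$, $B^T$, $C$ in the correct orientation — so that the block equation $L\,(e_b^T;\,e_n^T)^T=0$ splits as intended.
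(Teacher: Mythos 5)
Your proof is correct and is essentially identical to the paper's: both derive $C^{-1}B^Te_b=-e_n$ from the vanishing of the last $n$ row sums and substitute into $Re_b=Ae_b-BC^{-1}B^Te_b=Ae_b+Be_n$ to identify the row sums of $R$ with the first $b$ row sums of $L$. The only difference is notational bookkeeping about whether the all-ones vector is written as a row or a column.
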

 \begin{proof}
%Let $e$ denote the all-ones vector of appropriate size.
By assumption, the last $n$ row sums of $L$ are zero: $B^Te_b+Ce_n=0$, which implies
$C^{-1}B^Te_b=-e_n$. In view of this, zero row sums of $R$ mean that $0=
Re_{b}
=Ae_b-BC^{-1}B^Te_b
=Ae_b+Be_n$, which in turn expresses the fact that % is equivalent to
the first $b$ row sums of $L$
are %being
zero.
\end{proof}

%  Then $\Lambda$ is the response matrix of some alternating-current
%  network with at most $2b-2$ nodes.

\section{Proof of Sufficiency and Construction of the Network}
\label{sec:sufficiency}

Before giving the proof, we will
study the simple example of just one interior node
$y$ in addition to the boundary nodes $x_1,\ldots,x_b$, see Figure~\ref{fig:example}.
We give the edge between $x_u$ and $y$ a conductance
$\delta +iw_u$ with a small positive real part $\delta$, leaving
the imaginary part $w_u$ as a parameter, subject to the constraint
$\sum_{u=1}^b w_u = 0$.
\begin{figure}[h]
  \centering
\includegraphics{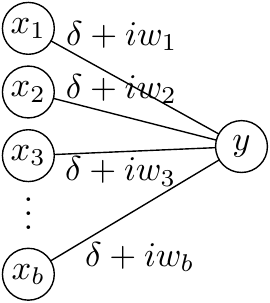}
  \caption{A network with one interior node $y$}
  \label{fig:example}
\end{figure}
Calculating the response matrix $R$ by Proposition~\ref{response} gives $\real
r_{uv}= (w_uw_v -\delta^2)/\delta b $ for the off-diagonal entries. Thus,
with this method, one can produce, for the real part of the response
matrix, any positive semidefinite % symmetric
rank-one matrix
$(w_uw_v)$ with row sums 0, up to a small error % term
$\delta/b$ 
in all entries. The parameters $w_u$ must be suitably scaled to
offset the factor $1/\delta b$.

By adding more interior nodes in this way, we can build up a sum of
positive semidefinite
 rank-one matrices, and hence an arbitrary
positive semidefinite
 matrix $S$ with row sums~0.
This is the main idea of the construction for the real part $S$ of $\Lambda$.
 We must take care of the accumulated error terms in % that are added to
the entries. We are able to accommodate them since is there is some 
tolerance for changing % increasing!
 all off-diagonal entries of $S$ by the same
amount while keeping the eigenvalues nonnegative. We will in fact
choose the parameter $\delta$ in such a way such that $S$ gets an
additional zero eigenvalue, and this will allow us to save one interior node
in the construction.

The complex part of $\Lambda$ can be handled as an afterthought.
We assign a %small
fixed positive real conductance
to every edge between two boundary nodes.
This gives us the freedom to adjust the complex part of these edges as we like,
and to achieve any desired complex part of the response matrix.

%\begin{proof}
We now begin with the formal proof.
As mentioned in Section~\ref{sec:result}, the case $b=2$ can be easily
handled without interior nodes. We
thus assume $b\ge 3$ in order to avoid degenerate situations.
  Since $S$ is symmetric, it can be written as
  \begin{equation*}
    S=U D U^T
  \end{equation*}
  with
a diagonal matrix $D=\diag(\lambda_1,\ldots,\lambda_b)$ whose entries are the
 eigenvalues $\lambda_1\le \lambda_2\le\cdots \le
\lambda_b$, and
  an orthogonal matrix $U$ whose columns are the corresponding normalized 
eigenvectors of $S$.
Since $S$ is positive semidefinite, all eigenvalues are nonnegative.
By assumption~3, $S$ has %rank $b-1$, and there is
only one zero eigenvalue:
$0=\lambda_1< \lambda_2$.
From assumption~3 (or~1) of the theorem, we know the eigenvector
corresponding to $\lambda_1=0$: it is the vector with all entries
equal.
Thus we can take
the vector $e_b %(1,\ldots,1)
/\sqrt b$
as the first
column of $U$.

We now decrease all positive eigenvectors by % some small amount
$\lambda_2$, so that they remain nonnegative. %positive, 
To achieve this, we replace the diagonal matrix of eigenvalues
$D$ by
\begin{equation*}
 D' =  D-\lambda_2 \left[\unit b - 
\left(  \begin{smallmatrix}
    1&0&0&\ldots\\
    0&0&0&\ldots\\
    0&0&0&\ldots\\
\cdot&\cdot&\cdot&\cdot\hfill\\[-0.65\baselineskip]
\cdot&\cdot&\cdot&\phantom\cdot\cdot\hfill\\[-0.65\baselineskip]
\cdot&\cdot&\cdot&\phantom{\cdot\cdot}\cdot\hfill
  \end{smallmatrix}\right)
\right],
\end{equation*}
and this results in the matrix
\begin{align}\label{S'}
  S'
&=U D' U^T
\\&\nonumber
=U D U^T - \lambda_2 U U^T + \lambda_2 e_b/\sqrt b \cdot e_b^T/\sqrt b
=S - \lambda_2 \unit b + \lambda_2 \ones_{b\times b}/b
,
\end{align}
%where $\ones $ is the all-ones matrix.
%where $\ones_{m\times n}$ %$\mathbf{1}$
% denotes an all-ones matrix of dimension
%${m\times n}$.
In other words, $S'$ is obtained from $S$ by increasing each off-diagonal entry by
$\lambda_2/b$ and adjusting the diagonal so that the row sums remain $0$.

It will be convenient to rewrite \eqref{S'} in a different way:
\begin{equation*}
  S'
=U
\sqrt{ D'}
\sqrt{ D'}
 U^T
=
\bigl(U\sqrt{ D'}
\bigr)
\bigl(U\sqrt{ D'}
\bigr)^T
 = V V ^T,
\end{equation*}
where the columns of $V=U\sqrt{ D'}
$ are no longer normalized.
 The columns of $V$ correspond to the interior nodes that we will
 add to the network.
 We can reduce their number % of columns of $V$
 by observing
that,
as the first two diagonal entries of $D'$ are zero,
the first two columns of $V$ %(and possibly more) 
are zero.
They contribute nothing to $S'$
 and can be omitted, resulting in the
 real $b\times (b-2) $ matrix $W%=(w_{kj})
 $ %, for some $n\le b-2$,
 with
%$S'=WW^T$ and 
$$W W^T=S' = S 
- \lambda_2 \unit b
+  \lambda_2 \ones_{b\times b} /b.
$$
To obey the conventions of Section~\ref{sec:preliminaries},
we denote by $n=b-2$ the number of columns of~$W$.
(If the eigenvalue $\lambda_2$ has higher multiplicity, then
more %than two
columns of $V$ are zero and $n$ could be
 reduced.) % below $b-2$.)
Since the columns of $U$ are orthogonal and its
first column is $e_b%(1,1,\ldots,1)
/\sqrt b$, the remaining columns of $U$, and
hence all columns of $W$, are orthogonal to~$e_b$: % this column:
%have
\begin{equation}\label{sum-0}
%\begin{equation*}
  W^T e_b = 0
%\end{equation*}
%or, more explicitly,
%  \sum_{k=1}^b w_{kj} = 0
\end{equation}
%for $j=1,\ldots,n$.

We are now ready to define the network.
The imaginary parts of the conductances of the edges between
the boundary nodes are represented by a
\pagebreak[3]
symmetric real $b\times b$ matrix~$F$
%, representing the imaginary part of
%the upper left corner, 
that will be determined later.
With the parameters
$\delta := %b
\lambda_2/2n$
and $\eps := \sqrt{b\delta}$, we
 set up the symmetric $(b+n)\times(b+n)$ matrix 
\begin{equation*}
  L:=
  \begin{pmatrix}
    \lambda_2 \unit b - \lambda_2/2b\cdot \ones_{b\times b} + Fi & 
-\delta \ones_{b\times  n }
 + \eps W i
\\
-\delta \ones_{ n \times b} + \eps W^T i
& \delta b \unit n
  \end{pmatrix}.
\end{equation*}
We will show that it yields the desired response matrix $\Lambda$, and
it is indeed the Laplace matrix of a network with $n$ interior nodes.
Let us calculate the response matrix $R$
by Proposition~\ref{response}:
\begin{align*}
R &=
   \lambda_2 \unit b
 - \lambda_2/2b\cdot \ones_{b\times b} + Fi 
-
(-\delta \ones_{b\times  n }
 + \eps W i)
(\delta b \unit n)^{-1}
(-\delta \ones_{ n \times b} + \eps W^T i)
\end{align*}
 Its real part is
\begin{align*}
  \real R 
& =
   \lambda_2 \unit b - \lambda_2/2b\cdot \ones_{b\times b}
-\tfrac 1{\delta b}(\delta^2 n 
\ones_{b\times b}
-\eps^2WW^T)\\
&=
   \lambda_2 \unit b - 
 \ones_{b\times b}
(\lambda_2/2b
+\delta n /b)
+WW^T\\
&=
   \lambda_2 \unit b - 
 \ones_{b\times b}
(\lambda_2/2b
+\lambda_2/2b)
+S-\lambda_2 \unit b + \ones_{b\times b}\cdot \lambda_2/b 
= S,
\end{align*}
as desired. 
Since we can choose $F$ arbitrarily, the imaginary part of $R$ can be adjusted to any desired value~$T$.
The straightforward calculation gives
the explicit formula
\begin{equation}
\nonumber %  \label{eq:F}
 F := T-
 \sqrt{\delta/b}\, \bigl(
W\ones_{n\times  b }+
\ones_{b\times  n }W^T\bigr)
.
\end{equation}
 Thus, we have achieved $R=\Lambda$.

To conclude the proof, 
we still have to show that $L$
is the Laplace matrix of a network whose conductances have positive
real parts:
(a) All off-diagonal elements of $L$, whenever they are nonzero, have %obviously
negative real parts, namely $-\lambda_2/2b$ or $-\delta$, and hence the corresponding conductances have
positive real parts.
(b)~Finally, we need to check that the row sums of $L$ are zero.  The sums
of the last $n$ rows are
$-\delta \ones_{ n \times b}e_b + \eps W^Te_b i +\delta b \unit ne_n =
-\delta be_n + 0 + \delta be_n = 0 $,
by applying~\eqref{sum-0} for the second term.
%
%Now,
Since the row sums of $R=\Lambda$ are 0 by assumption,
Lemma~\ref{row-sums} allows us to conclude without further calculation
that the first $b$ row sums of $L$ are also~0. \qed

\section{An Example}
\label{sec:example}
We have seen that the imaginary part
of $\Lambda$
is not an issue. Thus, for simplicity, we choose a
real matrix as an example:
\begin{equation}\label{example}
  \Lambda=
  \begin{pmatrix}
    2 & 1 & -3 \\
1 & 2 & -3 \\
-3 & -3 & 6
  \end{pmatrix}
\end{equation}
This % following
 matrix % $\Lambda$ 
has some positive % and negative
off-diagonal entries. Hence, it is not the response matrix of a
network without interior nodes, and it
 cannot be the response matrix of any direct-current
network whatsoever. %at all.

The eigenvalues of %this matrix %
$\Lambda$
 are
$\lambda_1=0, \lambda_2=1, \lambda_3=9$. The matrix 
$W$ has $n=1$ column, which is a scaled copy of % built from 
the eigenvector $(1,1,-2)^T$ that corresponds to $\lambda_3$.
One can recognize this vector in the last %row and
column of
the matrix $L$ below
in the imaginary parts. Our method sets
$\delta=1/2$, $\eps = \sqrt{3/2}$, and constructs the following
Laplace matrix:
%alternating-current network.
\begin{displaymath}
L =
\left(\begin{array}{ccc|c}
+ \frac{5}{6} -\frac{2}{3} i \, \sqrt{2} &
- \frac{1}{6} -\frac{2}{3} i \, \sqrt{2} &
- \frac{1}{6} +\frac{1}{3} i \, \sqrt{2} &
- \frac{1}{2} +i \, \sqrt{2} \\[\jot]
- \frac{1}{6} -\frac{2}{3} i \, \sqrt{2} &
+ \frac{5}{6} -\frac{2}{3} i \, \sqrt{2} &
- \frac{1}{6} +\frac{1}{3} i \, \sqrt{2} &
- \frac{1}{2} +i \, \sqrt{2} \\[\jot]
- \frac{1}{6} +\frac{1}{3} i \, \sqrt{2} &
- \frac{1}{6} +\frac{1}{3} i \, \sqrt{2} &
+ \frac{5}{6} +\frac{4}{3} i \, \sqrt{2} &
- \frac{1}{2} -2 i \, \sqrt{2} \\[\jot]
\hline
\omit\vbox to\jot{}&&&\\
- \frac{1}{2} +i \, \sqrt{2} &
- \frac{1}{2} +i \sqrt{2} &
- \frac{1}{2} -2 i \, \sqrt{2} &
 \frac{3}{2}
\end{array}\right)
\end{displaymath}

\bibliography{alt-current-networs}

% Let Lambda be the desired response matrix.

% 1. As in your proof, we can find a small delta>0 such that
% the matrix

%       B = Re Lambda + delta * (J-b*I)

% still satisfies the requirement of being positive semidefinite
% with (1,1,1,..1) as the only vector in the kernel.
% (J is the all-ones matrix, I is the identity matrix).

% 2. Write B = U^T D U where the b-1 columns of
% U are the nonzero normalized eigenvectors and
% D is the diagonal matrix of eigenvalues.

% 3. Choose eps = b*delta/(b-1)

% 4. For each eigenvector (c_1,...c_b) with eigenvalue v,
% add a new node and connect it to the terminals
% j=1,2,...,b by edges of conductance

%    eps + i*(c_u*sqrt(b*eps*v))

% 5. The resulting network has the correct real parts of
% Lambda. (The complex parts can be adjusted easily,
% by using part of the delta-perturbation for this purpose.
% This I did not carry out.)

\end{document}